\documentclass[]{interact}
\usepackage{fix-cm}
\usepackage{xcolor}
\usepackage{epstopdf}
\usepackage[caption=false]{subfig}

\usepackage[numbers,sort&compress]{natbib}
\bibpunct[, ]{[}{]}{,}{n}{,}{,}
\makeatletter
\def\NAT@def@citea{\def\@citea{\NAT@separator}}
\makeatother

\theoremstyle{plain}
\newtheorem{theorem}{Theorem}[section]
\newtheorem{lemma}[theorem]{Lemma}
\newtheorem{corollary}[theorem]{Corollary}
\newtheorem{proposition}[theorem]{Proposition}

\theoremstyle{definition}
\newtheorem{definition}[theorem]{Definition}
\newtheorem{example}[theorem]{Example}

\theoremstyle{remark}
\newtheorem{remark}[theorem]{Remark}

\newcommand{\N}{\mathbb{N}}

\begin{document}
	
	
	\title{On the Strong Quasiconvexity of Norms and Distance Functions}

		\author{
		\name{Vo Si Trong Long\textsuperscript{a,b} and Nguyen Mau Nam\textsuperscript{c}\thanks{CONTACT Nguyen Mau Nam. Email: mnn3@pdx.edu} \\ \medskip 
        \textbf{Dedicated to Prof. Phan Quoc Khanh on his 80th birthday}}
		\affil{\textsuperscript{a}Faculty of Mathematics and Computer Science, University of Science, Ho Chi Minh City, Vietnam; \textsuperscript{b} Vietnam National University, Ho Chi Minh City, Vietnam;
        \textsuperscript{c}Fariborz Maseeh Department of Mathematics and Statistics,
		Portland State University, Portland, OR 97207, USA.}}

	\maketitle

	\begin{abstract}
    	This paper studies the strong quasiconvexity of norm and distance functions in finite-dimensional normed spaces. Although the Euclidean norm is known to be strongly quasiconvex on bounded convex sets, a complete characterization of this property for general norms remains open. We establish a necessary condition for the strong quasiconvexity of an arbitrary norm function on a convex set. For the $\ell_p$-norm with $1<p\le2$, this condition becomes also sufficient. We then initiate the study of the strong quasiconvexity of distance functions. Our results provide new insights into the geometric properties of norm and distance functions and extend several existing results in the literature.
	\end{abstract}
	
	\begin{keywords} Quasiconvexity; strong quasiconvexity; norms; distance functions; strongly convex sets.
		
	\end{keywords}
\textbf{2020 Mathematics Subject Classification.}  47A30 $\cdot$ 52A21 $\cdot$ 49J52 $\cdot$ 26B25

\section{Introduction and Preliminaries}
	
	Convex analysis plays a fundamental role in optimization theory and has been instrumental in a wide range of applications across  economics, engineering, operations research, machine learning, data science, signal processing, control theory, and mathematical finance. For comprehensive treatments and further details on convex analysis and its applications, we refer the reader to \cite{Bauschke,Nesterov,rock27,za28}. Motivated by the need to study optimization problems beyond the classical convex framework, a variety of generalized convexity notions have been introduced and extensively investigated in the literature. Among these notions, the concepts of quasiconvexity and strong quasiconvexity have attracted considerable attention due to their ability to preserve certain important properties of convex functions while allowing for greater flexibility in modeling and applications.

	The notion of strong quasiconvexity was originally introduced by Boris T. Polyak in~\cite{polyak} in the context of optimization, with the aim of ensuring stability and  convergence properties of minimizing sequences. In comparison with quasiconvexity, strong quasiconvexity imposes additional structural conditions that often guarantee the existence and uniqueness of a global optimal solution in many important settings.
	Extensive studies on the properties and characterizations of this class of functions have led to a broad range of applications in both theoretical analysis and numerical methods, including proximal point algorithms, variational inequalities, and equilibrium problems; see \cite{Crouzeix,Grad,jova,Hadjisavvas,Hadjisavvas2,Hadjisavvas1,lara1,lara2,lara3,Nam-Jacob,vanhang,Zalinescu} and the references therein.
	
	Due to the fundamental role played by norm functions and distance functions in convex analysis and related areas, an important and intriguing question is to investigate the strong quasiconvexity properties of these classes of functions.  A landmark result was established by  Jovanović in \cite{jova}, showing that the norm function is strongly quasiconvex on every bounded convex subset of the Euclidean space $\mathbb{R}^n$. Recently, this result was extended by Nam and Sharkansky \cite{Nam-Jacob} to infinite-dimensional spaces, where various conditions ensuring the preservation of strong quasiconvexity were studied.
	Despite these advances, several important questions remain open. In particular, does the strong quasiconvexity of a norm function necessarily imply the boundedness of the underlying convex set? Moreover, under what conditions do distance functions possess strong quasiconvexity properties?
	
	Inspired by the aforementioned works, the main objective of this paper is twofold. 
    First, we show that the strong quasiconvexity of an arbitrary norm function on a convex set implies the boundedness of the underlying set. For the $\ell_p$-norm with $1<p\le2$, we obtain a complete characterization by showing that this property is equivalent to boundedness.
  Second, we establish sufficient conditions for the strong quasiconvexity of distance functions, with particular emphasis on their behavior on strongly convex sets in the sense of Vial \cite{vial}. Through these investigations, we further explore the relationship between the geometric properties of sets and the analytical properties of the associated distance functions.

	The paper is organized as follows. Section \ref{sec3} establishes a characterization of the strong quasiconvexity of norm functions. In Section \ref{sec4} we study the strong quasiconvexity of distance functions relative to balls and strongly convex sets. Several illustrative examples are provided to clarify the theoretical developments. Finally, Section \ref{sec5} concludes the paper with remarks and possible directions for future research.

Throughout this paper, the sets of real numbers and positive integers are denoted by
$\mathbb{R}$ and $\mathbb{N}$, respectively.  The
$\ell_p$-norm on $\mathbb{R}^n$ for $p \geq 1$ is denoted by $\| \cdot \|_p$.  	
For $c \in \mathbb{R}^n$ and $r \geq 0$, we denote the closed $\ell_p$-ball and the
$\ell_p$-sphere with center $c$ and radius $r$ by $\mathbb{B}^p[c; r]$ and
$\mathbb{S}^p[c; r]$, respectively. When $c = 0$, we simply write
$\mathbb{B}^p_r$ and $\mathbb{S}^p_r$ instead of $\mathbb{B}^p[0; r]$ and
$\mathbb{S}^p[0; r]$.

Given an arbitrary norm $\|\cdot\|$ on $\mathbb{R}^n$ and a nonempty set
$\Omega\subset\mathbb{R}^n$, the \textit{distance function} associated with
$\Omega$ and $\|\cdot\|$ is defined by
\[
	d_\Omega(x)=\inf\{\|x-y\|\mid y\in\Omega\}
	\ \; \text{for }x\in\mathbb{R}^n.
\]

We now recall the  definitions of quasiconvexity and strong quasiconvexity used in what follows.
	\begin{definition}\label{defquasiconvex}
Let $C$ be a nonempty convex subset of $\mathbb{R}^n$, and let $\sigma \ge 0$ be a constant. A function $f \colon C \to \overline{\mathbb{R}} = [-\infty, \infty]$ is said to be $\sigma$-\textit{quasiconvex} on $C$ if 
\begin{equation*}\label{eq:quasiconvex_property}
f(\lambda x + (1-\lambda)y) \le \max\{f(x), f(y)\} - \frac{\sigma}{2} \lambda(1-\lambda) \|x - y\|_2^2
\end{equation*}
for all $x, y \in C$ and all $\lambda \in (0,1)$. In the case where $\sigma > 0$, we say that $f$ is $\sigma$-\textit{strongly quasiconvex} on $C$.
\end{definition}

\begin{definition}
	Given a nonempty closed convex subset $C$ of $\mathbb{R}^n$ and a point $x\in C$, the \textit{horizon cone} of $C$ at $x$ is defined by
	\[
	C_{\infty}(x)
	=
	\{v\in\mathbb{R}^n \mid x+tv\in C \text{ for all } t\geq0\}.
	\]
\end{definition}

It is well known that $C_\infty(x)$ is the same for every $x\in C$; see, e.g., \cite[Proposition~6.2]{mordukhovich2023easy}. Thus, it can be simply denoted by $C_\infty$.

We conclude this section by recalling a standard result from convexity; see, e.g.,~\cite{jensen}.
	
	\begin{theorem}\label{thm:midpoint-jensen}
		Let $h\colon [a,b]\to\mathbb{R}$ be a continuous function such that
		\begin{equation}\label{midpointconvex}
			h\!\left(\frac{s+t}{2}\right)
			\;\le\;
			\frac{h(s)+h(t)}{2}\ \; \mbox{\rm for all } s,t\in[a,b].
		\end{equation}
		Then $h$ is convex on $[a,b]$, that is,
		\[
		h\bigl((1-\lambda)s+\lambda t\bigr)
		\;\le\;
		(1-\lambda)h(s) + \lambda h(t)
		\ \; \mbox{\rm for all } s,t\in[a,b],\; \lambda\in[0,1].
		\]
	\end{theorem}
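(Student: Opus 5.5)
The plan is the classical dyadic-rationals argument followed by a density step that uses continuity. Fix $s,t\in[a,b]$ and write $\phi(\lambda)=h\bigl((1-\lambda)s+\lambda t\bigr)-\bigl[(1-\lambda)h(s)+\lambda h(t)\bigr]$ for $\lambda\in[0,1]$. It suffices to show $\phi\le 0$ on $[0,1]$.

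Several facts about $\phi$ are immediate:
\begin{itemize}
\item $\phi$ is continuous, since $h$ is continuous and the affine part is continuous.
\item $\phi(0)=\phi(1)=0$.
\item $\phi$ inherits midpoint convexity from \eqref{midpointconvex}. For $\lambda,\mu\in[0,1]$, the point $(1-\frac{\lambda+\mu}{2})s+\frac{\lambda+\mu}{2}t$ is the midpoint of the points corresponding to $\lambda$ and $\mu$, and both lie in $[a,b]$. The bracketed affine term is exactly averaged, so $\phi\bigl(\frac{\lambda+\mu}{2}\bigr)\le\frac{\phi(\lambda)+\phi(\mu)}{2}$.
\end{itemize}

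\textbf{Step 1: dyadic rationals.} I will show by induction on $k$ that $\phi(j/2^k)\le 0$ for all $0\le j\le 2^k$. The case $k=0$ is $\phi(0)=\phi(1)=0$. For the inductive step, take a point $j/2^{k+1}$. If $j$ is even, the point is a dyadic of level $k$ and the hypothesis applies. If $j$ is odd, the point is the midpoint of $(j-1)/2^{k+1}$ and $(j+1)/2^{k+1}$, both of level $k$. Midpoint convexity of $\phi$ then gives $\phi(j/2^{k+1})\le\frac{1}{2}(0+0)=0$.

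\textbf{Step 2: density.} The dyadic rationals are dense in $[0,1]$. Given $\lambda\in[0,1]$, choose dyadics $\lambda_m\to\lambda$. By continuity of $\phi$, $\phi(\lambda)=\lim\phi(\lambda_m)\le 0$. This is exactly the claimed inequality.

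The only point needing care is that the midpoint step stays inside $[a,b]$. This holds because $[a,b]$ is convex, and the points involved are convex combinations of $s$ and $t$. Continuity is essential in Step 2; without it the statement fails for pathological additive functions. I do not expect a genuine obstacle here, since this is the standard Jensen argument.
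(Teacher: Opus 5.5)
Your proof is correct and complete: passing to the one-variable function $\phi$, getting $\phi\le 0$ at the dyadic rationals by induction from midpoint convexity, and extending to all of $[0,1]$ by continuity is the classical proof of Jensen's theorem. The paper gives no proof of this statement and only cites Jensen~\cite{jensen}, so your argument supplies the standard proof behind that citation. One cosmetic fix: in Step~1, write $\phi(j/2^{k+1})\le\frac12\bigl(\phi(\frac{j-1}{2^{k+1}})+\phi(\frac{j+1}{2^{k+1}})\bigr)\le 0$ instead of $\frac12(0+0)$, since the inductive hypothesis gives only nonpositivity, not equality.
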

	
Note that a function $h$ satisfying \eqref{midpointconvex} is called \emph{midpoint convex}.

	\section{The $\sigma$-strong quasiconvexity of norm funtions}	\label{sec3}

    In this section, we study necessary conditions for the $\sigma$-strong quasiconvexity of an arbitrary norm function on $\mathbb{R}^n$. We then obtain a necessary and sufficient condition for the $\ell_p$-norm when $1<p\le2$.
     We begin with the following lemma.

	\begin{lemma}\label{lem:1D-midpoint-strong}
		Let $f \colon  [0,1] \to \mathbb{R}$ be a continuous function. 
		Assume that there exists a constant $\alpha \geq 0$ such that		
		\begin{equation}\label{eq:g-midpoint}
			f\!\left(\frac{s+t}{2}\right)
			\le \frac{f(s)+f(t)}{2}
			- \alpha (t-s)^2\; \ \mbox{\rm for all} \ \;  s,t\in[0,1].
		\end{equation}
		Then we have
		\begin{equation*}\label{eq:g-strong}
			f(\lambda)
			\le (1-\lambda)f(0) + \lambda f(1)
			- 4\alpha\,\lambda(1-\lambda)\; \  \mbox{\rm for all }\lambda\in[0,1].
		\end{equation*}
	\end{lemma}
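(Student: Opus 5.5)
Set $h(t)=f(t)-4\alpha t^2$ on $[0,1]$. The aim is to show that $h$ is convex and then unwind the definition.

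\textbf{Step 1: $h$ is midpoint convex.} Expanding the squares gives the parallelogram-type identity
\[
\frac{s^2+t^2}{2}-\Bigl(\frac{s+t}{2}\Bigr)^2=\frac{(t-s)^2}{4}.
\]
Hence
\[
h\Bigl(\frac{s+t}{2}\Bigr)=f\Bigl(\frac{s+t}{2}\Bigr)-4\alpha\Bigl(\frac{s+t}{2}\Bigr)^2
\le \frac{f(s)+f(t)}{2}-\alpha(t-s)^2-4\alpha\Bigl(\frac{s+t}{2}\Bigr)^2 .
\]
Substituting $4\alpha\bigl(\frac{s+t}{2}\bigr)^2 = 4\alpha\frac{s^2+t^2}{2}-\alpha(t-s)^2$, the two $\alpha(t-s)^2$ terms cancel. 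The right-hand side is therefore exactly $\frac{h(s)+h(t)}{2}$, so $h$ is midpoint convex.

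\textbf{Step 2: $h$ is convex.} The function $h$ is continuous, since $f$ is continuous. Theorem~\ref{thm:midpoint-jensen} then applies on $[0,1]$ and gives convexity of $h$.

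\textbf{Step 3: unwind.} Apply convexity with $s=0$ and $t=1$:
\[
f(\lambda)-4\alpha\lambda^2\le(1-\lambda)f(0)+\lambda\bigl(f(1)-4\alpha\bigr).
\]
Rearranging,
\[
f(\lambda)\le(1-\lambda)f(0)+\lambda f(1)-4\alpha(\lambda-\lambda^2).
\]
Since $\lambda-\lambda^2=\lambda(1-\lambda)$, this is the claimed inequality.

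The only nontrivial step is choosing the right quadratic correction. The constant $4\alpha$ is forced by the factor $\tfrac14$ in the identity of Step 1. After that, everything reduces to the cited midpoint-convexity theorem.
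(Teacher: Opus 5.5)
Your proposal is correct and follows the paper's proof essentially step for step. Both subtract the quadratic $4\alpha t^2$, verify midpoint convexity of the difference via the identity $\frac{s^2+t^2}{2}-\bigl(\frac{s+t}{2}\bigr)^2=\frac{(t-s)^2}{4}$, invoke Theorem~\ref{thm:midpoint-jensen} using continuity, and then unwind convexity at the endpoints $0$ and $1$.
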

	
	\begin{proof} Given $\lambda\in[0,1],$ define 
		\begin{equation*}g(\lambda) = 4\alpha \lambda^2\ \;\text{and}\ \; 
		h(\lambda) = f(\lambda) - g(\lambda).
        \end{equation*}
		We claim that $h$ is midpoint convex.
		Indeed, fix any $s,t\in [0,1]$ and get
		\[
		\frac{g(s)+g(t)}{2} - g\!\left(\frac{s+t}{2}\right)
		= 4\alpha\Big(\frac{s^2+t^2}{2} - \frac{(s+t)^2}{4}\Big)
		= \alpha (t-s)^2.
		\]
		Combining this with \eqref{eq:g-midpoint}, we obtain
		\[
		h\!\left(\frac{s+t}{2}\right)
		\le \frac{h(s)+h(t)}{2} \text{\; for all } s,t\in[0,1].
		\]
		Since $h$ is continuous, it follows from Theorem \ref{thm:midpoint-jensen} that $h$ is convex on $[0,1]$.  
		Therefore,
		\[
		h(\lambda)
		\le (1-\lambda)h(0)+\lambda h(1)\text{\; for all } \lambda\in[0,1].
		\]
		Substituting $h=f-g$ gives us
		\[
		\begin{aligned}
			f(\lambda)
			&\le (1-\lambda)f(0)+\lambda f(1)
			-4\alpha\lambda +4\alpha\lambda^2 \\
			&= (1-\lambda)f(0)+\lambda f(1)
			-4\alpha\lambda(1-\lambda),
		\end{aligned}
		\]
		which completes the proof. 
	\end{proof}

The next proposition provides a connection between the strongly midpoint convexity of a  function and its strong quasiconvexity.
	
	\begin{proposition}\label{pro:midpoint-to-lambda}
		Let  $C\subset\mathbb{R}^n$ be a nonempty convex set.
		Consider a continuous function $f\colon C\to\mathbb{R}$.
		Suppose that there exists a constant $\mu \geq 0$ such that 
		\begin{equation}\label{eq:midpoint}
			f\!\left(\frac{x_1+x_2}{2}\right)
			\le 
			\frac{f(x_1)+f(x_2)}{2}
			-\mu\,\|x_1-x_2\|_2^{2}\; \ \mbox{\rm for all }x_1,x_2\in C,
		\end{equation}
		Then $f$ is $\sigma$-strongly quasiconvex on $C$, where $\sigma=8\mu$.
	\end{proposition}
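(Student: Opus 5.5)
The plan is to reduce to the one-dimensional setting of Lemma~\ref{lem:1D-midpoint-strong} by restricting $f$ to segments. Fix $x,y\in C$ and define $\varphi\colon[0,1]\to\mathbb{R}$ by $\varphi(\lambda)=f((1-\lambda)x+\lambda y)$. Since $C$ is convex, the whole segment $[x,y]$ lies in $C$, so $\varphi$ is well defined. Since $f$ is continuous on $C$ and the map $\lambda\mapsto(1-\lambda)x+\lambda y$ is continuous, $\varphi$ is continuous on $[0,1]$.

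Next I would verify the hypothesis \eqref{eq:g-midpoint} for $\varphi$. For $s,t\in[0,1]$, put $x_1=(1-s)x+sy$ and $x_2=(1-t)x+ty$, both in $C$. Then
\[
\frac{x_1+x_2}{2}=\Big(1-\frac{s+t}{2}\Big)x+\frac{s+t}{2}\,y
\quad\text{and}\quad
x_1-x_2=(s-t)(x-y).
\]
Applying \eqref{eq:midpoint} gives
\[
\varphi\Big(\frac{s+t}{2}\Big)\le\frac{\varphi(s)+\varphi(t)}{2}-\mu\|x-y\|_2^2(t-s)^2 .
\]
This is \eqref{eq:g-midpoint} with $\alpha=\mu\|x-y\|_2^2\ge0$.

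Lemma~\ref{lem:1D-midpoint-strong} then yields, for every $\lambda\in[0,1]$,
\[
f((1-\lambda)x+\lambda y)\le(1-\lambda)f(x)+\lambda f(y)-4\mu\lambda(1-\lambda)\|x-y\|_2^2 .
\]
Since $(1-\lambda)f(x)+\lambda f(y)\le\max\{f(x),f(y)\}$, and $4\mu=\tfrac{8\mu}{2}$, this is the inequality of Definition~\ref{defquasiconvex} with $\sigma=8\mu$. The roles of $\lambda$ and $1-\lambda$ are symmetric: swapping $x$ and $y$, or using the symmetry of $\lambda(1-\lambda)$, puts it in the exact form $f(\lambda x+(1-\lambda)y)$.

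There is no real obstacle here. The only points needing care are the continuity of $\varphi$, which is what allows Lemma~\ref{lem:1D-midpoint-strong} (and, through it, Theorem~\ref{thm:midpoint-jensen}) to apply, and getting the constant right by tracking $\alpha=\mu\|x-y\|_2^2$. In fact the argument proves the stronger statement that $f$ is strongly convex with modulus $8\mu$. If $\mu=0$, the same argument still works and gives plain quasiconvexity, which is consistent with the definition.
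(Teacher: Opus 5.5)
Your proposal is correct and follows essentially the same route as the paper: restrict $f$ to the segment, verify the one-dimensional midpoint inequality with $\alpha=\mu\|x-y\|_2^2$, apply Lemma~\ref{lem:1D-midpoint-strong}, and bound the convex combination by the maximum to obtain $\sigma=8\mu$. One harmless slip: $x_1-x_2=(t-s)(x-y)$, not $(s-t)(x-y)$, which does not matter because only the squared norm enters.
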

	
	\begin{proof}  Fix any $x_1,x_2 \in C$ with $x_1 \neq x_2$.
		For every $\lambda\in[0,1]$, we define
		\begin{equation}\label{dat1}
			\gamma(\lambda)=\lambda x_1+(1-\lambda)x_2\ \;\text{and}\ \; g(\lambda)=f(\gamma(\lambda)).	    
		\end{equation}
		Then $g$ is continuous on $[0,1]$. For any $s,t\in[0,1]$, we have
		\[
		\gamma\Big(\frac{s+t}{2}\Big)=\frac{\gamma(s)+\gamma(t)}{2}.
		\]
		Applying \eqref{eq:midpoint} with $x_1=\gamma(s)$ and $x_2=\gamma(t)$ gives us
		\[
		g\Big(\frac{s+t}{2}\Big)
		\le \frac{g(s)+g(t)}{2}
		-\mu\,\|\gamma(s)-\gamma(t)\|_2^{2}.
		\]
		Since $\|\gamma(s)-\gamma(t)\|_2=|t-s|\,\|x_1-x_2\|_2,$
		we obtain
		\[
		g\Big(\frac{s+t}{2}\Big)
		\le \frac{g(s)+g(t)}{2}
		-\mu\,(t-s)^2\,\|x_1-x_2\|_2^{2}.
		\]
		Thus, $g$ satisfies  inequality \eqref{eq:g-midpoint} in Lemma~\ref{lem:1D-midpoint-strong} with parameter
		\[
		\alpha=\mu\,\|x_1-x_2\|_2^{2}.
		\]
		By Lemma~\ref{lem:1D-midpoint-strong}, we have
		\begin{equation}\label{dat2}
			g(\lambda)
			\le (1-\lambda)g(0)+\lambda g(1)
			-4\alpha\,\lambda(1-\lambda)\ \;\text{for all }\lambda\in[0,1].    
		\end{equation}
		Since $g(0)=f(x_2)$ and $g(1)=f(x_1)$, it follows from \eqref{dat1} and \eqref{dat2} that
		\[
		f(\gamma(\lambda))
		\le (1-\lambda)f(x_2)+\lambda f(x_1)
		-4\mu\,\lambda(1-\lambda)\|x_1-x_2\|_2^{2}.
		\]
		Setting $\sigma=8\mu$ and utilizing the inequality
		\[
		(1-\lambda)f(x_2)+\lambda f(x_1)
		\le \max\{f(x_1),f(x_2)\},
		\]
		we arrive at
		\[f(\gamma(\lambda))
		\le 
		\max\{f(x_1),f(x_2)\}
		-\frac{\sigma}{2}\,\lambda(1-\lambda)\|x_1-x_2\|_2^{2}.
		\]
		This justifies the 		$\sigma$-strong quasiconvexity of $f$  on $C$.
	\end{proof}

	Next, we establish necessary and sufficient conditions for the strong quasiconvexity of norm functions on a convex set.
	To prove the main result of this section, we rely on two auxiliary lemmas.
	The first lemma may be viewed as a converse to the implication discussed 
	in \cite[Example~2]{jova}.

    \begin{lemma}\label{lem:1Dbounded}
Let $\|\cdot\|$ be an arbitrary norm on $\mathbb{R}^n$, let $x_0,v\in \mathbb{R}^n$ with $\|v\|=1$, and let $I \subset \mathbb{R}$ be a nonempty interval. Define the function $\varphi\colon \mathbb{R}\to \mathbb{R}$ by
\[
\varphi(t)=\|x_0+tv\| \ \; \text{for } t\in\mathbb{R}.
\]
If there exists a constant $\sigma>0$ such that $\varphi$ is $\sigma$-strongly quasiconvex on $I$, then the interval $I$ is necessarily bounded.
\end{lemma}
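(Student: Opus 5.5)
Suppose, for contradiction, that $I$ is unbounded. Then $I$ contains a half-line, say $[a,\infty)$; the case $(-\infty,a]$ reduces to this one by replacing $v$ with $-v$. The key facts about $\varphi$ are that it is convex, $1$-Lipschitz, and grows at most linearly, since
\[
\varphi(t)\le \|x_0\|+|t|.
\]
Strong quasiconvexity instead forces a quadratic gap along long segments. I will play these two facts against each other.

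For large $T>a$, apply $\sigma$-strong quasiconvexity of $\varphi$ on $I$ at the points $x=a$ and $y=T$ with $\lambda=1/2$. This gives
\[
\varphi\Big(\tfrac{a+T}{2}\Big)\le \max\{\varphi(a),\varphi(T)\}-\tfrac{\sigma}{8}(T-a)^2 .
\]
The right-hand side is bounded above by $\|x_0\|+|a|+T-\tfrac{\sigma}{8}(T-a)^2$. The left-hand side is a norm, so it is at least $0$. Letting $T\to\infty$, the quadratic term dominates the linear one, and the right-hand side tends to $-\infty$. This contradicts $\varphi\ge 0$.

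Hence $I$ cannot contain a half-line. Since an interval is unbounded exactly when it contains a half-line, $I$ is bounded.

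The argument needs no delicate step. The only point to check is the reduction from ``unbounded interval'' to ``contains $[a,\infty)$ or $(-\infty,a]$''. This holds because $I$ is nonempty and convex in $\mathbb{R}$: if $I$ is unbounded above, pick any $a\in I$, and then every $t\ge a$ lies between $a$ and some larger point of $I$, so $t\in I$. The normalization $\|v\|=1$ is only used for the linear growth bound; any nonzero $v$ would work with the constant adjusted. The Euclidean distance $|x-y|_2=|T-a|$ in the definition is automatic here because $I\subset\mathbb{R}$.
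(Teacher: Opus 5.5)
Your proof is correct and follows the paper's argument. You reduce to $I\supset[a,\infty)$; the paper instead translates so that $a=0$ and reflects for the unbounded-below case. You then apply $\sigma$-strong quasiconvexity at $a$ and $T$ with $\lambda=1/2$, and let the quadratic gap $\frac{\sigma}{8}(T-a)^2$ overwhelm the linear bound $\varphi(t)\le\|x_0\|+|t|$. The only difference is that you use the trivial lower bound $\varphi\ge 0$ where the paper uses $\varphi(R/2)\ge R/2-\|x_0\|$, which is a harmless simplification.
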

	
	\begin{proof}
	Suppose that there exists $\sigma>0$ such that $\varphi$ is $\sigma$-strongly quasiconvex on $I$. Suppose on the contrary that  $I$ is unbounded.
		
		\medskip
		\noindent\emph{Step 1: Reduction to the case $0\in I$ and $I$ unbounded above.}
		Fix an element $t_0\in I$. Define  interval $I' = I - t_0$. Obviously, $0 = t_0 - t_0 \in I'$, and $I'$ is also unbounded. We first consider the case where  $I'$ is unbounded from  above. 
		Define $\psi \colon I' \to \mathbb{R}$ by
		\begin{equation}\label{defpsi}
			\psi(s) = \varphi(t_0 + s)\;\text{ for } s \in I'.
		\end{equation}
		We will show that $\psi$ is $\sigma$-strongly quasiconvex on $I'$.
		Indeed, let $s_1,s_2 \in I'$, and let $\lambda \in (0,1)$ be arbitrary.
		Set 
		\begin{equation}\label{deft}
			t_1 = t_0 + s_1\ \;\text{and}\ \; t_2 = t_0 + s_2.    
		\end{equation}
		By the $\sigma$-strong quasiconvexity of $\varphi$ on $I$, we have
		\[
		\varphi\bigl(\lambda t_1 + (1-\lambda)t_2\bigr)
		\le
		\max\{\varphi(t_1),\varphi(t_2)\}
		-\frac{\sigma}{2}\,\lambda(1-\lambda)\,|t_1 - t_2|^2.
		\]
		Using \eqref{defpsi} and \eqref{deft}, the inequality above can be rewritten as
		\[   \psi\bigl(\lambda s_1 + (1-\lambda)s_2\bigr)\le
		\max\{\psi(s_1),\psi(s_2)\}
		-\frac{\sigma}{2}\,\lambda(1-\lambda)\,|s_1 - s_2|^2.
		\]
		Hence, $\psi$ is $\sigma$-strongly quasiconvex on $I'$. Now, we consider the case where 
		$I'$ is unbounded  from below. Consider the  interval 
		$$\widehat I = -I' = \{-s \mid s\in I'\}$$ and define 
		$$\widehat \psi(t) = \varphi(t_0 -t)\;\text{ for }t\in \widehat I.$$
		A direct verification shows that $\widehat\psi$ is also $\sigma$-strongly quasiconvex on $\widehat I$, where the interval $\widehat I$ is  unbounded from above and contains~$0$.
		In both cases, after replacing $I$ and $\varphi$ with one of the pairs $(I',\psi)$ or $(\widehat I,\widehat \psi)$, we may assume without loss of generality that $0\in I$ and $I$ is unbounded from above.
		
		\medskip
		\noindent\emph{Step 2: Contradiction from strong quasiconvexity.}
		Using the $\sigma$-strong quasiconvexity of $\varphi$ with $t_1=0$, 
		$t_2=R\in I$ and $\lambda=1/2$ gives us
		\begin{equation}\label{estimate1}
			\varphi\Big(\tfrac{R}{2}\Big)
			\le\max\{\varphi(0),\varphi(R)\}
			-\frac{\sigma}{2}\cdot\frac14\,R^2
			=\max\{\varphi(0),\varphi(R)\}
			-\frac{\sigma}{8}R^2.    
		\end{equation}
		Since $\|v\|=1$, we see that 
		\begin{equation}\label{estimate2}
			\begin{array}{ll}
				\varphi\Big(\tfrac{R}{2}\Big)&
				=\Big\|x_0+\tfrac{R}{2}v\Big\|\\[0.1in]
				&\ge\tfrac{R}{2}\|v\|-\|x_0\|\\[0.1in]
				&=\tfrac{R}{2}-\|x_0\|.
			\end{array}
		\end{equation}
		For $R$ sufficiently large, we have 
		\begin{equation*}\label{estimate3}
			\begin{array}{ll}
				\max\{\varphi(0),\varphi(R)\}   &=\max\{\|x_0\|,\|x_0+Rv\|\}   \\[0.1in]
				& =\|x_0+Rv\|\\[0.1in]
				&\le R+\|x_0\|.
			\end{array}
		\end{equation*}
		This together with \eqref{estimate1} and \eqref{estimate2} implies that
		\[
		\frac{R}{2}-\|x_0\|
		\le
		\varphi\Big(\tfrac{R}{2}\Big)
		\le
		R+\|x_0\|-\frac{\sigma}{8}R^2.
		\]
		Hence
		\begin{equation*}\label{contractinequality}
			0
			\le
			\frac{1}{2}R+2\|x_0\|-\frac{\sigma}{8}R^2,  
		\end{equation*}
		which leads to a contradiction by letting $R\to \infty$. Therefore, $I$ is  bounded. 
	\end{proof}

	\begin{lemma}\label{lem:ray-in-omega}
		Let $C \subset \mathbb{R}^n$ be a nonempty closed convex set and let $x_0 \in C$.
		Let $v \in \mathbb{R}^n$ be a nonzero vector. Suppose there exist 
		sequences $(v_k)\subset \mathbb{R}^n$ and $(t_k)\subset(0,\infty)$ such that $v_k\to v$, $t_k \to \infty$ and
		$$
		x_k = x_0 + t_k v_k \in C \ \; \text{for all } k \ge 1.
		$$
		Then the entire ray
		$\mathcal{R} = \{x_0 + t v \mid t \ge 0 \}$
		is a subset of $C$.
	\end{lemma}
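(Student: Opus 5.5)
Fix an arbitrary $t\ge 0$; it suffices to show $x_0+tv\in C$. The plan is to express this point as a limit of convex combinations of $x_0$ and $x_k$, which lie in $C$ by convexity, and then use that $C$ is closed.

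Since $t_k\to\infty$, we have $t_k>t$ for all sufficiently large $k$. For such $k$ we set $\lambda_k=t/t_k\in[0,1)$ and define
\[
y_k=(1-\lambda_k)x_0+\lambda_k x_k .
\]
Because $x_0,x_k\in C$ and $C$ is convex, $y_k\in C$. Substituting $x_k=x_0+t_kv_k$ gives
\[
y_k=x_0+\lambda_k t_k v_k=x_0+t v_k .
\]
As $v_k\to v$, we get $y_k\to x_0+tv$. Since $C$ is closed, $x_0+tv\in C$. As $t\ge0$ was arbitrary, $\mathcal R\subset C$.

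Note that the hypothesis $v\neq 0$ is not used, and none of the steps is a genuine obstacle. The only point requiring care is choosing the convex weight $\lambda_k=t/t_k$ so that the scaling of $v_k$ is exactly $t$. This choice requires restricting to indices with $t_k\ge t$, which is allowed because $t_k\to\infty$.
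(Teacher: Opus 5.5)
Your proof is correct, but it takes a more elementary route than the paper. The paper sets $\gamma_k = 1/t_k$, observes that $\gamma_k(x_k - x_0) = v_k \to v$, and invokes the sequential characterization of the horizon cone (Proposition~6.4 of Mordukhovich and Nam). This gives $v \in (C-x_0)_\infty = C_\infty$, and the ray property then follows from the definition of the horizon cone, using that $C_\infty$ does not depend on the base point.

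Your choice of weights $\lambda_k = t/t_k$ is exactly the mechanism behind that cited characterization, carried out directly at the base point $x_0$: convexity puts $x_0 + t v_k$ in $C$, and closedness passes to the limit.

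The paper's version is shorter and ties the lemma to standard horizon-cone machinery. Yours is self-contained, avoids both the external reference and the base-point invariance of $C_\infty$, and makes clear that the hypothesis $v \neq 0$ plays no role here. That hypothesis only matters in Theorem~\ref{thm:Omega}, where $\|v\| = 1$ is needed to obtain $\|v\|_2 > 0$.
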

	
	\begin{proof} Choose $x_k\in C$ such that $x_0+t_kv_k=x_k$ and let $\gamma_k=1/t_k$ for every $k\in \N$. Then $\gamma_k\to 0$ and $\gamma_k(x_k-x_0)=v_k\to v$. By Proposition 6.4 in \cite{mordukhovich2023easy}, we have $$v\in (C-x_0)_\infty,$$ which implies by definition that $\mathcal{R}\subset C$. 		
	\end{proof}
	
	We are now ready to establish the main theorem of this section.

\begin{theorem}\label{thm:Omega}
Consider an arbitrary norm $\|\cdot\|$ on $\mathbb{R}^n$, and let $C \subset \mathbb{R}^n$ be a nonempty convex set. If there exists a constant $\sigma>0$ such that the norm function $f(x)=\|x\|$ is $\sigma$-strongly quasiconvex on $C$, then the set $C$ is necessarily bounded. 
\end{theorem}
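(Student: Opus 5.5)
We argue by contradiction. Suppose $f(x)=\|x\|$ is $\sigma$-strongly quasiconvex on $C$ but $C$ is unbounded. The plan has three steps: find a ray lying in a suitable convex set, restrict $f$ to that ray, and then invoke Lemma~\ref{lem:1Dbounded}.

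\emph{Step 1: pass to the closure.} Lemma~\ref{lem:ray-in-omega} needs a closed set, so we first show that $f$ is also $\sigma$-strongly quasiconvex on $\overline{C}$. The set $\overline{C}$ is convex. Take $x,y\in\overline{C}$ and $\lambda\in(0,1)$, and choose $x_k\to x$, $y_k\to y$ with $x_k,y_k\in C$. The defining inequality holds for $x_k,y_k$. Every term in it ($f$, the max, and $\|x_k-y_k\|_2^2$) is continuous, so letting $k\to\infty$ gives the inequality for $x,y$. Clearly $\overline{C}$ is still unbounded.

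\emph{Step 2: find a ray.} Fix $x_0\in\overline{C}$. Since $\overline{C}$ is unbounded, there are $x_k\in\overline{C}$ with $t_k:=\|x_k-x_0\|\to\infty$. Set $v_k=(x_k-x_0)/t_k$, so that $\|v_k\|=1$. All norms on $\mathbb{R}^n$ are equivalent, so the unit sphere is compact, and after passing to a subsequence $v_k\to v$ with $\|v\|=1$. Then $x_k=x_0+t_kv_k$, and Lemma~\ref{lem:ray-in-omega} shows that the whole ray $\{x_0+tv\mid t\ge0\}$ lies in $\overline{C}$.

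\emph{Step 3: restrict to the ray.} Define $\varphi(t)=\|x_0+tv\|$ on $I=[0,\infty)$. For $t_1,t_2\in I$ and $\lambda\in(0,1)$, the points $x_0+t_1v$ and $x_0+t_2v$ lie in $\overline{C}$, and
\[
\|(x_0+t_1v)-(x_0+t_2v)\|_2^2=|t_1-t_2|^2\,\|v\|_2^2 .
\]
The strong quasiconvexity of $f$ on $\overline{C}$ therefore makes $\varphi$ $\sigma'$-strongly quasiconvex on $I$ with $\sigma'=\sigma\|v\|_2^2$. Since $v\ne0$, we have $\sigma'>0$. Lemma~\ref{lem:1Dbounded} (with $\|v\|=1$ and $\sigma'$ in place of $\sigma$) then forces $I$ to be bounded, which is false. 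Hence $C$ is bounded.

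The main subtlety is the normalization in Step 3. The strong quasiconvexity is measured in $\|\cdot\|_2$, while Lemma~\ref{lem:1Dbounded} requires $\|v\|=1$ in the given norm. These are reconciled by rescaling the constant to $\sigma'=\sigma\|v\|_2^2$. The closure step is needed only because Lemma~\ref{lem:ray-in-omega} requires a closed set.
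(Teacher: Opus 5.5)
Your proposal is correct and follows the paper's proof step for step: you pass to $\overline{C}$ by continuity, extract a unit direction $v$ and a ray in $\overline{C}$ via Lemma~\ref{lem:ray-in-omega}, restrict $f$ to that ray with the rescaled constant $\sigma\|v\|_2^2$, and reach a contradiction with Lemma~\ref{lem:1Dbounded}. Your explicit limiting argument for the closure step just spells out what the paper states in one line.
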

	 
	\begin{proof}  
		 Suppose on the contrary that $C$ is unbounded. 
		Since $C$ is convex, its closure $\overline{C}$ is also convex
		and unbounded. By the continuity of $f$, the $\sigma$-strong quasiconvexity of $f$ on $C$
extends to $\overline C$.
		Since $\overline{C}$ is unbounded, there exist $x_0 \in \overline{C}$
		and a sequence $(x_k) \subset \overline{C}$ such that $$\|x_k\|\to\infty\ \; \text{as } k\to \infty.$$
		Setting
		$$
		t_k = \|x_k - x_0\| 
		\ \;\text{and}\ \;
		v_k = \frac{x_k - x_0}{\|x_k - x_0\|},
		$$
		we see that $t_k \to \infty$ and
		$$
		x_k = x_0 + t_k v_k \in \overline{C} \; \mbox{\rm for }k\in \N.
		$$
		Using a subsequence if necessary, we may assume that 
		$v_k \to v$ for some $v \in \mathbb{R}^n$ with $\|v\|=1$.
		Applying Lemma~\ref{lem:ray-in-omega} to the closed convex set 
		$\overline{C}$, we conclude that 
		$\overline{C}$ contains the entire ray
		$$\mathcal{R} = \{x_0 + t v \mid t \ge 0\}.$$
		Define the function $\varphi \colon [0,\infty) \to \mathbb{R}$ by
		$$
		\varphi(t) = \|x_0 + t v\|\;\text{ for } t \in [0,\infty).
		$$
		For $t>s\geq 0$, set
		$$
		x = x_0 + s v 
		\ \;\text{and}\ \; 
		y = x_0 + t v.
		$$
		Then $x,y \in \mathcal{R} \subset \overline{C}$. By the 
		$\sigma$-strong quasiconvexity of $f$ on $\overline{C}$, we have
		\[
		f\big((1-\lambda)x + \lambda y\big)
		\le 
		\max\{f(x),f(y)\}
		-
		\frac{\sigma}{2}\lambda(1-\lambda)\|x-y\|_2^2\;\ \text{for any }\lambda\in(0,1).
		\]
Rewriting the above inequality in terms of $\varphi$, we obtain
\begin{equation}\label{theaboveinequality}
	\varphi\big((1-\lambda)s+\lambda t\big)
	\le 
	\max\{\varphi(s),\varphi(t)\}
	-
	\frac{\sigma}{2}\lambda(1-\lambda)\|v\|_2^2(t-s)^2.
\end{equation}
Since $\|v\|=1$, we have $\|v\|_2>0$. Setting
\[
	\widetilde\sigma=\sigma\|v\|_2^2>0,
\]
inequality~\eqref{theaboveinequality} becomes
\[
	\varphi\big((1-\lambda)s+\lambda t\big)
	\le 
	\max\{\varphi(s),\varphi(t)\}
	-
	\frac{\widetilde\sigma}{2}\lambda(1-\lambda)(t-s)^2.
\]
This shows that $\varphi$ is $\widetilde\sigma$-strongly quasiconvex on the
interval $[0,\infty)$. However, Lemma~\ref{lem:1Dbounded} asserts that
$\varphi$ cannot be $\widetilde\sigma$-strongly quasiconvex on any unbounded
interval. This contradiction shows that $C$ must be bounded, which completes the proof of the theorem.	
	\end{proof}

We end this section with a useful corollary.

\begin{corollary}\label{ref:Omega}
	Let $1<p\le 2$ and equip $\mathbb{R}^n$ with the $\ell_p$-norm $\|\cdot\|_p$.
	Let $C\subset\mathbb{R}^n$ be a nonempty convex set. Then the norm function
	$f(x)=\|x\|_p$ is $\sigma$-strongly quasiconvex on $C$ for some $\sigma>0$
	if and only if $C$ is bounded.
\end{corollary}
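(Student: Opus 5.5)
Necessity is immediate from Theorem~\ref{thm:Omega} applied to the norm $\|\cdot\|_p$. The work lies in sufficiency: if $C$ is bounded, then $f(x)=\|x\|_p$ is $\sigma$-strongly quasiconvex on $C$ for some $\sigma>0$. I would prove this by establishing a strong midpoint inequality of the form \eqref{eq:midpoint} for a suitable positive function of $f$ and then invoking Proposition~\ref{pro:midpoint-to-lambda}. The natural candidate is $g(x)=\|x\|_p^2$. For $1<p\le 2$ this function is $2(p-1)$-strongly convex with respect to $\|\cdot\|_p$, which is the classical $(p-1)$ uniform smoothness/convexity inequality of Ball--Carlen--Lieb:
\[
\Big\|\tfrac{x+y}{2}\Big\|_p^2\le \tfrac12\|x\|_p^2+\tfrac12\|y\|_p^2-\tfrac{p-1}{4}\|x-y\|_p^2 .
\]
Since $p\le 2$, we have $\|x-y\|_p\ge\|x-y\|_2$, so $g$ satisfies \eqref{eq:midpoint} with $\mu=(p-1)/4>0$. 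As $g$ is continuous, Proposition~\ref{pro:midpoint-to-lambda} yields that $g$ is $2(p-1)$-strongly quasiconvex on all of $\mathbb{R}^n$, hence on $C$.

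Next I transfer this from $g=f^2$ back to $f$, and this is where boundedness enters. Let $M=\sup_{x\in C}\|x\|_p<\infty$. Fix $x,y\in C$ and $\lambda\in(0,1)$, put $z=\lambda x+(1-\lambda)y$, $m=\max\{f(x),f(y)\}$ and $\delta=(p-1)\lambda(1-\lambda)\|x-y\|_2^2$. Then $f(z)^2\le m^2-\delta$. If $m=0$ there is nothing to prove. Otherwise,
\[
m-f(z)=\frac{m^2-f(z)^2}{m+f(z)}\ge\frac{\delta}{2m}\ge\frac{\delta}{2M}.
\]
This gives $f(z)\le m-\frac{p-1}{2M}\lambda(1-\lambda)\|x-y\|_2^2$, so $f$ is $\sigma$-strongly quasiconvex on $C$ with $\sigma=(p-1)/M$. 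The degenerate case $M=0$ means $C=\{0\}$, where the inequality holds vacuously for any $\sigma$.

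The main obstacle is the strong convexity inequality for $\|\cdot\|_p^2$ with $1<p\le2$. If citing it is not acceptable, I would prove it directly. First apply Clarkson-type reasoning coordinatewise to the function $t\mapsto|t|^p$, which is not uniformly convex near $0$, so this route is delicate. For that reason I would instead cite the known sharp inequality (Ball--Carlen--Lieb, or Nesterov's result that $\frac{1}{2}\|x\|_p^2$ is $(p-1)$-strongly convex relative to $\|\cdot\|_p$), and perhaps sketch its proof via the Hessian bound $\langle \nabla^2 g(x)h,h\rangle\ge 2(p-1)\|h\|_p^2$ off the coordinate hyperplanes, extended by continuity. The restriction $p\le2$ is used twice: in the strong convexity of $\|\cdot\|_p^2$ and in the comparison $\|\cdot\|_p\ge\|\cdot\|_2$.
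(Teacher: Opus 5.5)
Your proof is correct, and for sufficiency it takes a genuinely different route from the paper. The paper does not argue sufficiency itself; it cites \cite[Example~3.8]{Nam-Jacob}, which comes from the infinite-dimensional theory developed in that reference. Your argument runs as follows:
\begin{itemize}
\item start from the sharp $2$-uniform convexity inequality of Ball--Carlen--Lieb for $\|\cdot\|_p^2$;
\item pass to the Euclidean norm via $\|\cdot\|_p\ge\|\cdot\|_2$;
\item apply Proposition~\ref{pro:midpoint-to-lambda}, which the paper states but never actually uses;
\item go back from $f^2$ to $f$ through $m-f(z)=(m^2-f(z)^2)/(m+f(z))$ with $m+f(z)\le 2M$.
\end{itemize}

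Your route gives an explicit constant $\sigma=(p-1)/\sup_{x\in C}\|x\|_p$, which reduces to $1/M$ in the Euclidean case. It also shows exactly where boundedness and $p\le 2$ are used. Finally, it exposes a general principle: whenever $\|\cdot\|^2$ satisfies a strong midpoint inequality with respect to $\|\cdot\|_2$, the norm is strongly quasiconvex on every bounded convex set. The paper's citation gains brevity and whatever generality \cite{Nam-Jacob} offers beyond finite dimensions.

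Both arguments ultimately rest on a uniform convexity estimate for $\ell_p$, and your Hessian sketch does supply it. Off the coordinate hyperplanes, write $N=\|x\|_p$ and $S=\sum_i|x_i|^{p-1}\mathrm{sgn}(x_i)h_i$. Then $\langle\nabla^2 g(x)h,h\rangle=2(2-p)N^{2-2p}S^2+2(p-1)N^{2-p}\sum_i|x_i|^{p-2}h_i^2$. H\"older's inequality with exponents $2/p$ and $2/(2-p)$ gives $N^{2-p}\sum_i|x_i|^{p-2}h_i^2\ge\|h\|_p^2$, so the Hessian bound follows.

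The extension by continuity also works. The function $g$ is $C^1$ on $\mathbb{R}^n$, and a segment not lying in a coordinate hyperplane meets those hyperplanes in at most $n$ points. Hence the restriction of $g$ to such a segment, minus $(p-1)\|b-a\|_p^2t^2$, is convex, so the midpoint inequality holds on a dense set of pairs. By continuity it then holds for all pairs. The coordinatewise Clarkson idea is a dead end, and you are right to drop it.
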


\begin{proof}
	The necessity follows directly from Theorem~\ref{thm:Omega}. The sufficiency follows
	from \cite[Example~3.8]{Nam-Jacob}.
\end{proof}

	
	\section{The $\sigma$-strongly quasiconvexity of distance functions} \label{sec4}

    In this section, we establish sufficient conditions under which the distance function to a given set is $\sigma$-strongly quasiconvex. Unless otherwise stated, the distance function is always understood with respect to the norm of the ambient space. To motivate the discussion, we begin with the following illustrative example.

	\begin{example} \label{ex1}
		Consider the Euclidean space $\mathbb{R}^2$ and the set
		$$\Omega = \{(u,v) \in \mathbb{R}^2 \mid u \le 0\}.$$
		Take the point	$x_0 = (1,0) \notin \Omega.$
		In this setting, the Euclidean distance function to $\Omega$ is given by
		$$d_\Omega(u,v) = \max\{0,\,u\}.$$
		Consider a neighborhood of \(x_0\) defined by
		$$U = \{(u,v) \in \mathbb{R}^2\mid \tfrac{1}{2}< u \leq 2 \text{ and } -2 \le v \le 2\}.$$
		Obviously, \(U \cap \Omega = \emptyset\) and $d_\Omega(u,v) = u$ for $u\in U$. 		Choose two points $x_1 = (1,0)$ and $x_2 = (1,1)$
		in \(U\).
		Then we have
		\[
		d_\Omega(x_1) = d_\Omega(x_2) = 1.
		\]
		Setting
		\[
		x_m = \frac{x_1 + x_2}{2} = \big(1,\tfrac{1}{2}\big),
		\]
		we obtain $d_\Omega(x_m) = 1$. 
		Suppose on the contrary that there exists a constant $\sigma>0$ such that $d_\Omega$ is $\sigma$-strongly quasiconvex on~$U$.  
		Then we have
		$$d_\Omega(x_m)
		\le 
		\max\{d_\Omega(x_1), d_\Omega(x_2)\}
		-
		\frac{\sigma}{8}\, \|x_1 - x_2\|_2^2.$$
		Since \(\|x_1 - x_2\|_2 = 1\), the above inequality yields
		$0\leq - \frac{\sigma}{8}$,
		which is a contradiction since $\sigma > 0$.
		Therefore, 
$d_\Omega$ cannot be $\sigma$-strongly quasiconvex on $U$, where $U$ is a bounded neighborhood of $x_0$ such that $U\cap \Omega=\emptyset$. 
	\end{example}
	
	From Example \ref{ex1}, we observe that although the distance function may fail to be $\sigma$-strongly quasiconvex in a neighborhood of a point outside a closed set, it still exhibits a uniform $\sigma$-strong quasiconvexity property along chords joining points on sufficiently large spheres centered at a Euclidean ball. The following proposition formalizes this observation.

\begin{proposition}\label{pro1.4}
	Consider the Euclidean space $\mathbb R^n$ and let $\Omega=\mathbb B^2_R$ be the closed ball centered at the origin with radius $R>0$. Then for every $r>R$, there exists a constant $\sigma_0>0$ such that for all
		$x_1,x_2\in\mathbb S^2_r$, one has
		\begin{equation}\label{key5}
			d_{\Omega}\big((1-\lambda)x_1+\lambda x_2\big)
			\le
			\max\{d_{\Omega}(x_1),d_{\Omega}(x_2)\}
			-
			\frac{\sigma_0}{2}\lambda(1-\lambda)\|x_1-x_2\|_2^2
		\end{equation}
		for all $\lambda\in(0,1)$. 
		If in addition the segment $[x_1,x_2]$ satisfies
		\[
			[x_1,x_2]\cap\mathbb B^2_R=\emptyset,
		\]
		then there exists a constant $\sigma_1>0$ such that the distance function $d_\Omega$ is $\sigma_1$-strongly quasiconvex on $[x_1,x_2]$.
\end{proposition}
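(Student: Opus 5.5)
The plan is to use the explicit formula for the distance to a Euclidean ball: $d_\Omega(x)=\max\{0,\|x\|_2-R\}$ for all $x\in\mathbb R^n$. This reduces \eqref{key5} to an estimate on the Euclidean norm of points on the chord $[x_1,x_2]$ of the sphere $\mathbb S^2_r$.

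Fix $x_1,x_2\in\mathbb S^2_r$, $\lambda\in(0,1)$, and set $x_\lambda=(1-\lambda)x_1+\lambda x_2$ and $D=\|x_1-x_2\|_2$. Since $\|x_1\|_2=\|x_2\|_2=r$, the parallelogram-type identity gives
\[
\|x_\lambda\|_2^2=(1-\lambda)\|x_1\|_2^2+\lambda\|x_2\|_2^2-\lambda(1-\lambda)D^2=r^2-\lambda(1-\lambda)D^2 .
\]
Using the elementary inequality $\sqrt{r^2-a}\le r-\frac{a}{2r}$ for $0\le a\le r^2$, we get
\[
\|x_\lambda\|_2\le r-\frac{\lambda(1-\lambda)D^2}{2r}.
\]
This applies because $a=\lambda(1-\lambda)D^2\le D^2/4\le r^2$, as $D\le 2r$. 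Note also that $d_\Omega(x_1)=d_\Omega(x_2)=r-R>0$.

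Next I would split into two cases according to whether $x_\lambda$ lies outside or inside the ball $\mathbb B^2_R$.
\begin{itemize}
\item If $\|x_\lambda\|_2\ge R$, then
\[
d_\Omega(x_\lambda)=\|x_\lambda\|_2-R\le (r-R)-\frac{1}{2}\cdot\frac1r\,\lambda(1-\lambda)D^2,
\]
which is \eqref{key5} with constant $1/r$.
\item If $\|x_\lambda\|_2<R$, then $d_\Omega(x_\lambda)=0$. It then suffices that $(r-R)-\frac{\sigma_0}{2}\lambda(1-\lambda)D^2\ge 0$. Since $\lambda(1-\lambda)D^2\le r^2$, this holds whenever $\sigma_0\le 2(r-R)/r^2$.
\end{itemize}
Hence $\sigma_0=\min\{1/r,\;2(r-R)/r^2\}>0$ works. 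This constant is independent of $x_1,x_2$ and $\lambda$, which gives the uniformity claimed. The main subtlety is exactly this clipping case, where the distance function vanishes. It is handled by the crude bound $D\le 2r$ together with the strict gap $r-R>0$.

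For the second assertion, assume $[x_1,x_2]\cap\mathbb B^2_R=\emptyset$ (the case $x_1=x_2$ is trivial). Then $d_\Omega(x)=\|x\|_2-R$ for all $x\in[x_1,x_2]$. Subtracting the constant $R$ does not affect the strong quasiconvexity inequality in Definition~\ref{defquasiconvex}. The segment $[x_1,x_2]$ is a bounded convex set. By Corollary~\ref{ref:Omega} with $p=2$ (equivalently, Jovanović's result \cite{jova}), the Euclidean norm is $\sigma_1$-strongly quasiconvex on $[x_1,x_2]$ for some $\sigma_1>0$, and hence so is $d_\Omega$.

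Alternatively, one can obtain an explicit $\sigma_1$ by applying Proposition~\ref{pro:midpoint-to-lambda} to $f=\|\cdot\|_2$ restricted to the segment. Midpoint strong convexity of $\|\cdot\|_2$ follows from $\|x\|_2=\|x\|_2^2/\|x\|_2$, with $\|x\|_2$ bounded above by $r$ and bounded below by the positive distance from $[x_1,x_2]$ to the origin. Citing the corollary, however, is the shortest route.
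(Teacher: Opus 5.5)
Your proof is correct and follows essentially the same route as the paper: the same case split on whether $\|x_\lambda\|_2\ge R$, the same key bound $r-\|x_\lambda\|_2\ge\frac{1}{2r}\lambda(1-\lambda)\|x_1-x_2\|_2^2$, the same constant $\sigma_0=\min\{1/r,\,2(r-R)/r^2\}$, and, for the second part, the same appeal to Corollary~\ref{ref:Omega} after subtracting $R$. The only difference is cosmetic: you derive the key bound directly from the identity $\|x_\lambda\|_2^2=r^2-\lambda(1-\lambda)\|x_1-x_2\|_2^2$, which is a slightly cleaner derivation than the paper's angle parametrization.
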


\begin{proof}
	 Fix any $r>R$ and take arbitrary $x_1,x_2\in\mathbb S^2_r$.
	Since $\|x_1\|_2=\|x_2\|_2=r$, we have
	\[
		d_\Omega(x_1)=d_\Omega(x_2)=r-R.
	\]
	Given $\lambda\in(0,1)$, define
	\[
		x_\lambda=(1-\lambda)x_1+\lambda x_2.
	\]
Observe that
	\[
		d_\Omega(x_\lambda)=\max\{\|x_\lambda\|_2-R,0\}.
	\]
	Therefore,
	\begin{equation}\label{eq:cases}
		d_\Omega(x_1)-d_\Omega(x_\lambda)
		=
		\begin{cases}
			r-\|x_\lambda\|_2 & \text{if } \|x_\lambda\|_2\ge R,\\[1mm]
			r-R & \text{if } \|x_\lambda\|_2<R.
		\end{cases}
	\end{equation}
	Let $\theta\in[0,\pi]$ be the angle between $x_1$ and $x_2$. Then
	\begin{equation}\label{eq:key1}
		\|x_1-x_2\|_2^2
		= \|x_1\|_2^2 + \|x_2\|_2^2 - 2\langle x_1,x_2\rangle=
		2r^2(1-\cos\theta).
	\end{equation}
	Moreover,
		\begin{align*}
			\|x_\lambda\|_2^2
			&= \|(1-\lambda)x_1 + \lambda x_2\|_2^2 \\
			&= (1-\lambda)^2\|x_1\|_2^2 + \lambda^2\|x_2\|_2^2 
			+ 2\lambda(1-\lambda)\langle x_1,x_2\rangle \\
			&= r^2\bigl((1-\lambda)^2 + \lambda^2 + 2\lambda(1-\lambda)\cos\theta\bigr) \\
			&= r^2\Big(1 - 2\lambda + 2\lambda^2 + 2\lambda(1-\lambda)\cos\theta\Big).
		\end{align*}
Setting	
		\begin{equation}\label{keyA}
			A = 1 - 2\lambda + 2\lambda^2 + 2\lambda(1-\lambda)\cos\theta,
		\end{equation}
		we get 
		\begin{equation}\label{key11}
			\|x_\lambda\|_2 = r\sqrt{A}.
		\end{equation}
		Since $\cos\theta\in[-1,1]$, we have
		\[
		(1-2\lambda)^2 \le A \le 1 - 2\lambda + 2\lambda^2 + 2\lambda(1-\lambda) = 1,
		\]
		and hence $A\in[0,1]$.
		It follows from the concavity of the square root function on $[0,1]$ that
		\begin{equation}\label{keycanA}
			1-\sqrt{A} \;\ge\; \frac{1-A}{2}.
		\end{equation}
	Combining \eqref{eq:key1} through \eqref{keycanA} gives us
		\begin{equation}\label{key2}
			\begin{array}{ll}
				r-\|x_\lambda\|_2
				&=r(1-\sqrt{A})\\[2mm] 
				&\ge \frac{r}{2}\,(1-A)\\[2mm]
				&\ge r\lambda(1-\lambda)(1-\cos\theta)\\[2mm]
				&=\frac{1}{2r}\,\lambda(1-\lambda)\|x_1-x_2\|_2^2.
			\end{array}
		\end{equation}
		Since $0<\lambda(1-\lambda)\le\tfrac14$ and 
		$\|x_1-x_2\|_2\le 2r$, we obtain
		\[
		\lambda(1-\lambda)\|x_1-x_2\|_2^2 \le r^2,
		\]
		and hence
		\begin{equation}
			r-R \;\ge\; \frac{r-R}{r^2}\,\lambda(1-\lambda)\|x_1-x_2\|_2^2.
			\label{eq:lower2}	
		\end{equation}
		Using \eqref{eq:cases}, \eqref{key2} and \eqref{eq:lower2}, we get the estimate
		\[
		d_\Omega(x_1)-d_\Omega(x_\lambda)
		\;\ge\;
		\min\!\left\{\frac{1}{2r},\,\frac{r-R}{r^2}\right\}
		\lambda(1-\lambda)\|x_1-x_2\|_2^2.
		\]
		Choosing
		\[\sigma_0 = 2\min\!\left\{\frac{1}{2r},\,\frac{r-R}{r^2}\right\},\]	
		we arrive at
		\[d_\Omega(x_1)-d_\Omega(x_\lambda)\ge \frac{\sigma_0}{2}\lambda(1-\lambda)\|x_1-x_2\|_2^2.\]
		Since $d_\Omega(x_1)=d_\Omega(x_2)$, this inequality directly yields \eqref{key5}.
	
	\noindent Under the additional assumption imposed, let us show next that $d_\Omega$ is $\sigma_1$-strongly quasiconvex on $[x_1,x_2]$.
	By Corollary~\ref{ref:Omega}, applied to the Euclidean norm on the bounded convex
set $C=[x_1,x_2]$, there exists $\sigma_1>0$ such that the Euclidean norm
function $y\mapsto\|y\|_2$ is $\sigma_1$-strongly quasiconvex on $[x_1,x_2]$.
 Then it follows from Definition~\ref{defquasiconvex} that
	\[
		\|(1-\lambda)y_1+\lambda y_2\|_2
		\le
		\max\{\|y_1\|_2,\|y_2\|_2\}
		-
		\frac{\sigma_1}{2}\lambda(1-\lambda)\|y_1-y_2\|_2^2,
	\]
    for all
	$y_1,y_2\in [x_1,x_2]$ and all $\lambda\in(0,1).$
	Since $d_\Omega(y)=\|y\|_2-R$ for all $y\in [x_1,x_2]$, subtracting~$R$ from both sides gives us  
    \begin{equation*}
			d_{\Omega}\big((1-\lambda)y_1+\lambda y_2\big)
			\le
			\max\{d_{\Omega}(y_1),d_{\Omega}(y_2)\}
			-
			\frac{\sigma_1}{2}\lambda(1-\lambda)\|y_1-y_2\|_2^2
		\end{equation*}
		for all $y_1,y_2\in[x_1,x_2]$ and all $\lambda\in(0,1)$,
    which therefore completes the proof
of the proposition.
\end{proof}

	\begin{remark} Inequality \eqref{key5} in		Proposition~\ref{pro1.4} relies on the Euclidean structure of the norm.
Indeed, the conclusion may fail for a non-Euclidean norm on $\mathbb{R}^n$ as illustrated by the following example.
	\end{remark}
	
	\begin{example} Consider the plane $\mathbb{R}^2$ with the  maximum norm $\|\cdot\|_\infty$. Let $R=1/2$ and $r=2$. Then the closed ball and the sphere centered $0$ with radius $R$ and $r$ are given by
		$$\Omega=\mathbb{B}^\infty_R=\{(u,v)\in\mathbb{R}^2\mid \|(u,v)\|_\infty=\max\{|u|,|v|\}\leq 1/2\}=[-1/2,1/2]^2$$
		and
		$$\mathbb{B}^\infty_r=\{(u,v)\in \mathbb R^2\mid \max\{|u|,|v|\}\leq 2\}=[-2,2]^2,$$
		respectively.
		Take $x_1=(2,0)$ and $x_2=(2,1)$, which belong to $\mathbb{S}^\infty_r$. Then we have
		$$\|x_1\|_\infty=\|x_2\|_\infty=2.$$
		For any $\lambda\in (0,1)$, we have
		$$\|x_\lambda\|_\infty=\|(1-\lambda)x_1+\lambda x_2\|_\infty=\max\{2,\lambda\}=2.$$
		The distances from all these three points to $\Omega$ with respect to $\|\cdot\|_\infty$ are computed by
		$$d_\Omega(x_1)=d_\Omega(x_2)=d_\Omega(x_\lambda)=r-R=2-1/2=3/2\ \;\text{for all } \lambda\in (0,1).$$
		Suppose on the contrary that there exists a constant $\sigma >0$ satisfying 
		$$d_\Omega(x_\lambda)\leq \max\{d_\Omega(x_1),d_\Omega(x_2)\}-\frac{\sigma}{2}\, \lambda(1-\lambda)\, \|x_1 - x_2\|_2^2\; \ \mbox{\rm for all } \lambda\in(0,1).$$
		Since $\|x_1-x_2\|_2 = \|(0,-1)\|_2 = 1$, this inequality yields
		\[
		\tfrac32
		\le 
		\tfrac32
		-
		\frac{\sigma}{2}\,\lambda(1-\lambda),
		\]
		or equivalently,
		\[
		0 \le -\,\frac{\sigma}{2}\,\lambda(1-\lambda).
		\]
		This is a contradiction because $\sigma>0$ and $\lambda\in(0,1)$.
		Therefore, inequality \eqref{key5} fails for the maximum norm. 
	\end{example}
	
	\begin{remark}
		Inequality \eqref{key5} does not hold for all pairs $x_1,x_2\in\mathbb{R}^n$.
		Indeed, take any two distinct points $x_1,x_2\in\Omega=\mathbb{B}^2_R$.
		Since $\Omega$ is convex, the whole segment
		\[
		[x_1,x_2]
		= \{(1-\lambda)x_1+\lambda x_2 \mid \lambda\in[0,1]\}
		\]
		is contained in $\Omega$. Thus,
		\[
		d_\Omega(x_1) = d_\Omega(x_2) = 0
		\ \;\text{and}\ \;
		d_\Omega\big((1-\lambda)x_1+\lambda x_2\big) = 0
		\ \;\text{for all }\lambda\in(0,1).
		\]
		If there exists $\sigma>0$ such that \eqref{key5} holds for this pair $x_1,x_2$,
		then we obtain
		\begin{align*}
			0
			= d_\Omega\big((1-\lambda)x_1+\lambda x_2\big)
			&\le
			\max\{d_\Omega(x_1),d_\Omega(x_2)\}
			- \frac{\sigma}{2}\,\lambda(1-\lambda)\,\|x_1-x_2\|_2^2 \\
			&= - \frac{\sigma}{2}\,\lambda(1-\lambda)\,\|x_1-x_2\|_2^2
		\end{align*}
		for all $\lambda\in(0,1)$. Since $x_1\neq x_2$ and $\lambda(1-\lambda)>0$, the
		right-hand side is strictly negative, which is not the case.
	\end{remark}

The next result shows that the distance function $d_\Omega$ also possesses a
\textit{local} $\sigma$-strong quasiconvexity around every point lying outside
$\Omega$.

\begin{theorem}\label{pro1.3}
	Consider the space $\mathbb R^n$ equipped with the $\ell_p$-norm
	$\|\cdot\|_p$, where $1<p\le 2$. Let
		$\Omega=\mathbb B_R^p$
	be the closed ball centered at the origin with radius $R>0$. Then, for every
	point $x_0\notin\Omega$, there exist a bounded neighborhood $U$ of $x_0$ with
	$U\cap\Omega=\emptyset$ and a constant $\sigma>0$ such that the distance
	function $d_\Omega$ is $\sigma$-strongly quasiconvex on $U$.
\end{theorem}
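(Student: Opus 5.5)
Since $\|x_0\|_p>R$, set $\delta=(\|x_0\|_p-R)/2>0$ and take the neighborhood
\[
U=\mathbb B^p[x_0;\delta]=\{x\in\mathbb R^n\mid \|x-x_0\|_p\le\delta\}.
\]
Then $U$ is bounded, closed and convex. For every $x\in U$ the triangle inequality gives $\|x\|_p\ge\|x_0\|_p-\delta=R+\delta>R$, so $U\cap\Omega=\emptyset$. (If an open neighborhood is preferred, the open ball of the same radius works equally well, since strong quasiconvexity on a set passes to its convex subsets.)

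The key step is to show that $d_\Omega(x)=\|x\|_p-R$ for every $x\notin\Omega$. The upper bound comes from the feasible point $y=Rx/\|x\|_p\in\Omega$, which satisfies $\|x-y\|_p=(1-R/\|x\|_p)\|x\|_p=\|x\|_p-R$. The lower bound comes from the triangle inequality: for every $y\in\Omega$,
\[
\|x-y\|_p\ge\|x\|_p-\|y\|_p\ge\|x\|_p-R.
\]
This formula uses only the norm axioms, so it holds for every $p$, including $1<p\le2$.

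Next, $U$ is a nonempty bounded convex set, so by Corollary~\ref{ref:Omega} there is $\sigma>0$ such that $f(x)=\|x\|_p$ is $\sigma$-strongly quasiconvex on $U$. Take any $x_1,x_2\in U$ and $\lambda\in(0,1)$. By convexity $(1-\lambda)x_1+\lambda x_2\in U$, so all three points lie outside $\Omega$, and $d_\Omega$ agrees with $\|\cdot\|_p-R$ at each of them. Subtracting the constant $R$ from both sides of the strong quasiconvexity inequality for $\|\cdot\|_p$ preserves the inequality and the maximum, which gives
\[
d_\Omega\big((1-\lambda)x_1+\lambda x_2\big)\le\max\{d_\Omega(x_1),d_\Omega(x_2)\}-\frac{\sigma}{2}\lambda(1-\lambda)\|x_1-x_2\|_2^2.
\]
This is exactly the argument used at the end of Proposition~\ref{pro1.4}.

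The only step that needs care is the distance formula for the $\ell_p$-ball, and the explicit radial projection $Rx/\|x\|_p$ settles it. All the analytic work is already contained in Corollary~\ref{ref:Omega}, and the restriction $1<p\le2$ enters only through that corollary.
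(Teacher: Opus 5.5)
Your proof is correct and follows essentially the same route as the paper: take a small $\ell_p$-ball around $x_0$ that is disjoint from $\Omega$, apply Corollary~\ref{ref:Omega} to the norm on that bounded convex set, and subtract $R$ using $d_\Omega=\|\cdot\|_p-R$ there. The differences are cosmetic. You use a closed ball of radius $(\|x_0\|_p-R)/2$ where the paper uses an open ball of radius $\varepsilon<\|x_0\|_p-R$, and you verify the distance formula explicitly, which the paper simply asserts.
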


\begin{proof}
	Since $\Omega$ is closed and $x_0\notin\Omega$, we have
	\[
		\delta=d_\Omega(x_0)=\|x_0\|_p-R>0.
	\]
	Choose $\varepsilon\in(0,\delta)$ and define
	\[
		U=\{x\in\mathbb R^n\mid \|x-x_0\|_p<\varepsilon\}.
	\]
	Then $U$ is a bounded convex neighborhood of $x_0$. Moreover, for every
	$x\in U$, we have
	\[
		\|x\|_p\ge \|x_0\|_p-\|x-x_0\|_p>\|x_0\|_p-\varepsilon>R.
	\]
	Hence $U\cap\Omega=\emptyset$.
	Since $U$ is a nonempty bounded convex subset of $(\mathbb R^n,\|\cdot\|_p)$
	with $1<p\le2$, Corollary~\ref{ref:Omega} guarantees the existence of a constant
	$\sigma>0$ such that the norm function $x\mapsto \|x\|_p$ is
	$\sigma$-strongly quasiconvex on $U$, i.e.,
	\[
		\|(1-\lambda)x+\lambda y\|_p
		\le
		\max\{\|x\|_p,\|y\|_p\}
		-
		\frac{\sigma}{2}\lambda(1-\lambda)\|x-y\|_2^2
	\]
	for all $x,y\in U$ and all $\lambda\in(0,1)$. Since
	\[
		d_\Omega(x)=\|x\|_p-R\ \;\text{for all }x\in U,
	\]
	we obtain, by subtracting $R$ from both sides, that
	\[
	\begin{aligned}
		d_\Omega\big((1-\lambda)x+\lambda y\big)
		&=\|(1-\lambda)x+\lambda y\|_p-R\\
		&\le
		\max\{\|x\|_p,\|y\|_p\}-R
		-
		\frac{\sigma}{2}\lambda(1-\lambda)\|x-y\|_2^2\\
		&=
		\max\{\|x\|_p-R,\|y\|_p-R\}
		-
		\frac{\sigma}{2}\lambda(1-\lambda)\|x-y\|_2^2\\
		&=
		\max\{d_\Omega(x),d_\Omega(y)\}
		-
		\frac{\sigma}{2}\lambda(1-\lambda)\|x-y\|_2^2.
	\end{aligned}
	\]
	This shows that $d_\Omega$ is $\sigma$-strongly quasiconvex on $U$, and the
	proof is complete.
\end{proof}

 Theorem~\ref{pro1.3} involves the $\ell_p$-norm with $1<p\le2$ and does not
apply to an arbitrary norm, as shown in the following example.

\begin{example}
	Consider the plane $\mathbb R^2$ equipped with the $\ell_1$-norm. Let $R=1$
	and consider the $\ell_1$-ball
	\[\Omega=\mathbb B_R^1
		=
		\{(u,v)\in\mathbb R^2\mid \|(u,v)\|_1=|u|+|v|\le1\}.\]
	Take the point $x_0=(2,0)$, which lies outside $\Omega$. The $\ell_1$-distance
	to $\Omega$ is given by
	\[d_\Omega(u,v)=\max\{|u|+|v|-1,0\}.\]
	Suppose that $U$ is a bounded neighborhood of $x_0=(2,0)$ such that
	$U\cap\Omega=\emptyset$. Then there exists $\rho>0$ such that
	\[
		\mathbb B^1[(2,0);\rho]\subset U.
	\]
	Choose $\varepsilon\in(0,\rho)$ and take
	\[
		x_1=(2,0)
		\ \;\text{and}\ \;
		x_2=\left(2-\frac{\varepsilon}{2},\frac{\varepsilon}{2}\right).
	\]
	Then $x_1,x_2\in U$. Moreover,
	\[
		d_\Omega(x_1)=d_\Omega(x_2)=1.
	\]
	Their midpoint is
	\[
		x_m=\frac{x_1+x_2}{2}
		=
		\left(2-\frac{\varepsilon}{4},\frac{\varepsilon}{4}\right),
	\]
	and hence
	\[d_\Omega(x_m)=1.\]
	Suppose on the contrary that there exists a constant $\sigma>0$ such that
	$d_\Omega$ is $\sigma$-strongly quasiconvex on $U$. Then
	\[
		d_\Omega(x_m)
		\le
		\max\{d_\Omega(x_1),d_\Omega(x_2)\}
		-
		\frac{\sigma}{2}\cdot\frac14\|x_1-x_2\|_2^2.
	\]
	Since
	\[
		\|x_1-x_2\|_2^2
		=
		\left(\frac{\varepsilon}{2}\right)^2
		+
		\left(\frac{\varepsilon}{2}\right)^2
		=
		\frac{\varepsilon^2}{2},
	\]
	the above inequality yields $0\le -\frac{\sigma\varepsilon^2}{16},$
	which is impossible since $\sigma>0$ and $\varepsilon>0$. This shows that Theorem~\ref{pro1.3} fails for the
	$\ell_1$-norm.
\end{example}
    

	Next, we consider the concept of a \textit{strongly convex set}. This notion was  proposed in \cite{vial} and further studied in \cite{Martinez,nacry,Zalinescu}.

	Given $R>0$ and $x,y \in \mathbb{R}^n$ with
	$\|x-y\|_2 \le 2R$, define the set
	\[
	D_R(x,y)
	= \bigcap \{ B \in \mathcal{B}_R \mid x,y \in B \},
	\]
	where $\mathcal{B}_R = \{\mathbb{B}^2[c,R] \mid c \in \mathbb{R}^n\}$ is the collection of all closed balls of radius $R$.

	\begin{definition}\label{stronglyconvex}
		Let $\Omega$ be a nonempty bounded subset of the Euclidean space $\mathbb{R}^n$.
		We say that $\Omega$ is \emph{strongly convex with respect to} a positive real number $R$ if
		\[
		D_R(x,y) \subset \Omega
		\ \; \text{for all } x,y \in \Omega.
		\]
	\end{definition}
	
	The next theorem extends the result of Proposition \ref{pro1.4} to the case of strongly convex sets.

	\begin{theorem}\label{pro1.9-correct}
		Consider the Euclidean space $\mathbb{R}^n$.  
		Let $\Omega\subset \mathbb{R}^n$ be a nonempty compact set which is
		strongly convex with respect to $R>0$.
		Let $r>0$ be such that $$\mathbb{S}^2_r\cap\Omega=\emptyset.$$
		Suppose  that there exists a constant $c_0>0$ such that  
		\begin{equation}\label{metric-regular-sphere}
			\|p_1-p_2\|_2 \;\ge\; c_0\,\|x_1-x_2\|_2\; \ \mbox{\rm for all
		}x_1,x_2\in\mathbb{S}^2_r,
		\end{equation}
        where $p_i$ denotes the Euclidean projection
		of $x_i$ onto $\Omega$, $i=1,2$.
		Then there exists $\sigma>0$ (one may take $\sigma=c^2/R$ for some
		$0<c\le c_0$) such that for all $x_1,x_2\in\mathbb{S}^2_r$, one has
		\begin{equation}\label{key5-correct}
			d_{\Omega}\big( (1-\lambda) x_1 + \lambda x_2 \big)
			\le
			\max\{ d_{\Omega}(x_1),\, d_{\Omega}(x_2) \}
			-
			\frac{\sigma}{2}\,\lambda(1-\lambda)\,\|x_1 - x_2\|_2^2
		\end{equation}
         for all
		$\lambda\in(0,1)$.
	\end{theorem}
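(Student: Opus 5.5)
The plan is to bound $d_\Omega(x_\lambda)$ from above, where $x_\lambda=(1-\lambda)x_1+\lambda x_2$, by exhibiting an explicit point of $\Omega$ close to $x_\lambda$. The natural candidate is built from the projections $p_1,p_2$ of $x_1,x_2$. Strong convexity in the sense of Definition~\ref{stronglyconvex} says that $\Omega$ contains the whole lens $D_R(p_1,p_2)$, not just the segment $[p_1,p_2]$. The lens bulges beyond the chord by a sagitta of order $\lambda(1-\lambda)\|p_1-p_2\|_2^2/R$. Hypothesis \eqref{metric-regular-sphere} converts this into a gain of order $c_0^2\lambda(1-\lambda)\|x_1-x_2\|_2^2/R$, which is exactly the shape $\sigma=c^2/R$ announced in the statement. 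Boundedness of all quantities comes from the compactness of $\Omega$ and of $\mathbb S^2_r$. The assumption $\mathbb S^2_r\cap\Omega=\emptyset$ gives $d_\Omega(x_i)\ge\delta>0$ uniformly, so the normals $n_i=(x_i-p_i)/d_\Omega(x_i)$ are well defined.

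The key steps, in order, are as follows.

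First, I record the elementary identity, with $m_\lambda=(1-\lambda)p_1+\lambda p_2$:
\[
\|x_\lambda-m_\lambda\|_2^2=(1-\lambda)d_\Omega(x_1)^2+\lambda d_\Omega(x_2)^2-\lambda(1-\lambda)\|(x_1-p_1)-(x_2-p_2)\|_2^2 .
\]
Since $m_\lambda\in\Omega$ (strongly convex sets are convex), this already yields $d_\Omega(x_\lambda)\le\max_i d_\Omega(x_i)$. It gives a quadratic gain whenever $w:=(x_1-p_1)-(x_2-p_2)$ is comparable to $\|x_1-x_2\|_2$.

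Second, I verify the lens geometry: for every unit vector $u\perp p_2-p_1$, the point $m_\lambda+s\,u$ lies in $D_R(p_1,p_2)\subset\Omega$. Here
\[
s=\sqrt{R^2-\lambda(1-\lambda)\|p_1-p_2\|_2^2-\cdots}-\sqrt{R^2-\|p_1-p_2\|_2^2/4}\;\ge\;\frac{\lambda(1-\lambda)\|p_1-p_2\|_2^2}{2R},
\]
computed as in \eqref{keycanA}, using concavity of the square root. I will also use the companion supporting-ball fact $\Omega\subset\mathbb B^2[p_i-Rn_i;R]$, which is equivalent to
\[
\langle n_i,q-p_i\rangle\le-\frac{\|q-p_i\|_2^2}{2R}\quad\text{for all } q\in\Omega .
\]
From it I get that $n_1,n_2$ are almost orthogonal to $p_2-p_1$, up to an error of order $\|p_1-p_2\|_2^2/R$.

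Third, I split into two cases, with a threshold $\eta\,\|x_1-x_2\|_2$ on $\|w\|_2$.
\begin{itemize}
\item[(a)] If $\|w\|_2\ge\eta\|x_1-x_2\|_2$, the identity of the first step gives the decrease directly, with constant $\eta^2$. I then divide by $\|x_\lambda-m_\lambda\|_2+\max_i d_\Omega(x_i)\le 2D$, where $D$ bounds the distances on $\mathbb S^2_r$, to pass from squared distances to distances.
\item[(b)] If $\|w\|_2<\eta\|x_1-x_2\|_2$, then $x_\lambda-m_\lambda$ is close to $d\,n$ for a nearly common normal direction $n$. I take $u$ to be the normalized component of $x_\lambda-m_\lambda$ orthogonal to $p_2-p_1$ and compare with $y=m_\lambda+s\,u\in\Omega$. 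This gives
\[
\|x_\lambda-y\|_2\le\|x_\lambda-m_\lambda\|_2-s+(\text{error terms}).
\]
Combining with \eqref{metric-regular-sphere} yields $s\ge c_0^2\lambda(1-\lambda)\|x_1-x_2\|_2^2/(2R)$.
\end{itemize}
Taking the worse of the two constants, I choose $c\le c_0$ and set $\sigma=c^2/R$.

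The main obstacle is case (b): controlling the error terms uniformly in $\lambda$ and in the pair $x_1,x_2$. There are two sources of error. One is the angle between $x_\lambda-m_\lambda$ and the plane orthogonal to $p_2-p_1$. The other is the discrepancy between $d_\Omega(x_1)$ and $d_\Omega(x_2)$; these need not be equal, unlike in Proposition~\ref{pro1.4}, so the $\max$ on the right of \eqref{key5-correct} must absorb the difference. The errors must be shown to be of order $\eta\,\lambda(1-\lambda)\|x_1-x_2\|_2^2$ or smaller, so that choosing $\eta$ small (depending on $c_0,R,D,\delta$) leaves a net gain. What I would try first is a direct expansion: write $x_\lambda-m_\lambda=d_\lambda n_1+\lambda w'$, where $w'$ is $w$ up to sign and scaling, and use the near-orthogonality estimate from the supporting-ball inequality to bound $\langle n_1,p_2-p_1\rangle$. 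If that expansion is too lossy, the fallback is to shrink $c$ below $c_0$ by an explicit factor, which the statement explicitly allows.
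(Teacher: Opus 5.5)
Your case (a) is sound, but case (b) is where the whole difficulty lies, and you leave it open. You describe what you ``would try first'' and a ``fallback,'' but you prove no estimate there. The tool you plan to use is also not available. The supporting-ball inequality $\langle n_1,q-p_1\rangle\le-\|q-p_1\|_2^2/(2R)$ is one-sided: with $q=p_2$ it bounds $\langle n_1,p_2-p_1\rangle$ only from above, so it does not make $n_1$ almost orthogonal to $p_2-p_1$. For example, take $\Omega=\mathbb B^2[0;\rho]$ with $\rho<R$. This set is strongly convex with respect to $R$ and satisfies \eqref{metric-regular-sphere} with $c_0=\rho/r$, yet antipodal $x_1,x_2$ give $n_1$ collinear with $p_2-p_1$. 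So the error terms in (b) are not controlled. It is your restriction to displacements $u\perp p_2-p_1$ that creates them.

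The missing idea, which the paper uses, is that strong convexity gives a whole ball rather than a perpendicular bulge: $\mathbb B^2[m_\lambda;r_\lambda]\subset\Omega$ with $r_\lambda=\lambda(1-\lambda)\|p_1-p_2\|_2^2/(2R)$. This is \cite[Theorem~1]{vial}. It also follows from your lens picture, since the distance from $m_\lambda$ to the boundary of $D_R(p_1,p_2)$ is $R-\sqrt{R^2-\lambda(1-\lambda)\|p_1-p_2\|_2^2}\ge r_\lambda$. The argument then runs without any case split on $w$:
\begin{itemize}
\item $d_\Omega(x_\lambda)\le\max\{\|x_\lambda-m_\lambda\|_2-r_\lambda,0\}$, whatever the direction of $x_\lambda-m_\lambda$.
\item The plain triangle inequality $\|x_\lambda-m_\lambda\|_2\le(1-\lambda)d_\Omega(x_1)+\lambda d_\Omega(x_2)$ absorbs $d_\Omega(x_1)\neq d_\Omega(x_2)$.
\item Hypothesis \eqref{metric-regular-sphere} gives $r_\lambda\ge\frac{c^2}{2R}\lambda(1-\lambda)\|x_1-x_2\|_2^2$.
\item The zero branch is handled by taking $c\le\sqrt{2R\delta}/r$, so that $\frac{\sigma}{2}\lambda(1-\lambda)\|x_1-x_2\|_2^2\le\frac{\sigma}{2}r^2\le\delta$.
\end{itemize}

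Alternatively, your own tools close the argument once you notice that case (b) is empty. Add $d_\Omega(x_1)$ times the supporting-ball inequality at $p_1$ (with $q=p_2$) to $d_\Omega(x_2)$ times the one at $p_2$ (with $q=p_1$). This gives $\langle w,p_1-p_2\rangle\ge\frac{d_\Omega(x_1)+d_\Omega(x_2)}{2R}\|p_1-p_2\|_2^2$, hence $\|w\|_2\ge\frac{\delta c_0}{R}\|x_1-x_2\|_2$. With $\eta=\delta c_0/R$, case (a) always applies and yields $\sigma=\eta^2/D$.
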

	
	\begin{proof}
		Since $\Omega$ and $\mathbb{S}^2_r$ are disjoint compact sets, the distance between them is attained and strictly positive. Define
		\begin{equation}\label{eq:delta-lower}
			\delta = \min\{\|y-x\|_2\mid y\in\mathbb{S}^2_r,\ x\in\Omega\} > 0.
		\end{equation}
		Observe that if \eqref{metric-regular-sphere} holds for some $c_0>0$,
		then it also holds for every constant $c$ with $0<c\le c_0$.
		Choose
		\begin{equation}\label{eq:c-choice}
			c = \min\Bigl\{c_0,\sqrt{\frac{2R\delta}{r^2}}\Bigr\} > 0,
		\end{equation}
		and define
		\begin{equation}\label{eq:sigma-def}
			\sigma = \frac{c^2}{R} > 0.
		\end{equation}
		Then \eqref{metric-regular-sphere} holds with this $c$ and we have
		\begin{equation}\label{eq:sigma-delta}
			\frac{\sigma}{2}\,r^2
			= \frac{c^2}{2R}\,r^2
			\;\le\; \delta.
		\end{equation}
		Now fix any $x_1,x_2\in\mathbb{S}^2_r$. Since $\Omega$ is strongly convex with respect to $R>0$, it is also
		convex. Hence, the Euclidean projection $P_\Omega$ is single-valued
		(see, e.g., \cite[Corollary~1.67]{mordukhovich2023easy}).
		For $i=1,2$, let $p_i=P_\Omega(x_i)$ 
		be the Euclidean projection of $x_i$ onto~$\Omega$. 
		For each $\lambda\in(0,1)$, set
		\[
		x_\lambda = (1-\lambda)x_1 + \lambda x_2
		\ \;\text{and}\ \;
		p_\lambda = (1-\lambda)p_1 + \lambda p_2.
		\]
		Since $\Omega$ is strongly convex with respect to $R$, it follows from the equivalence between (i) and (ii) 
		in \cite[Theorem~1]{vial} that 
		\[
		\mathbb{B}^2\big[p_\lambda;r_\lambda\big] \subset \Omega \;\;\text{for any }\lambda\in(0,1),
			\]
		where
		$$r_\lambda = \frac{\lambda(1-\lambda)\,\|p_1-p_2\|_2^2}{2R}.$$
	Thus,
		\begin{equation}\label{eq:dist-ball}
			d_\Omega(x_\lambda)
			\;\le\;
			d_{\mathbb{B}^2[p_\lambda;r_\lambda]}(x_\lambda)
			=
			\max\bigl\{\|x_\lambda-p_\lambda\|_2-r_\lambda,\,0\bigr\}.
		\end{equation}
		By the triangle inequality, we have
		\begin{equation}\label{eq:m-lambda-p-lambda}
			\begin{array}{ll}
				\|x_\lambda-p_\lambda\|_2
				& \le (1-\lambda)\|x_1-p_1\|_2 + \lambda\|x_2-p_2\|_2\\ [0.1 in]
				& = (1-\lambda)d_\Omega(x_1) + \lambda d_\Omega(x_2).
			\end{array}
		\end{equation}
		Furthermore, by \eqref{metric-regular-sphere} and the choice of $c$ in
\eqref{eq:c-choice}, we have
		$$\|p_1-p_2\|_2\ge c\|x_1-x_2\|_2.$$ 
		This together with \eqref{eq:sigma-def} implies
		\begin{equation}\label{eq:r-lambda-lower}
			\begin{array}{ll}
				r_\lambda
				&= \frac{\lambda(1-\lambda)\,\|p_1-p_2\|_2^2}{2R}\\[0.1in]
				&\ge  \frac{\lambda(1-\lambda)c^2}{2R}\,\|x_1-x_2\|_2^2  \\[0.1in]
				& =\frac{\sigma}{2}\,\lambda(1-\lambda)\,\|x_1-x_2\|_2^2.
			\end{array}
		\end{equation}
		Next, we consider two cases: $x_\lambda\notin\Omega$ or $x_\lambda\in\Omega$. 
		In the case where $x_\lambda\notin\Omega$, we have
		\[
		d_{\mathbb{B}^2[p_\lambda;r_\lambda]}(x_\lambda)
		= \|x_\lambda-p_\lambda\|_2-r_\lambda.
		\]
		Using this equality with \eqref{eq:dist-ball}, \eqref{eq:m-lambda-p-lambda} and
		\eqref{eq:r-lambda-lower}, we obtain
		\begin{align*}
			d_\Omega(x_\lambda)
			&\le \|x_\lambda-p_\lambda\|_2 - r_\lambda \\
			&\le (1-\lambda)d_\Omega(x_1) + \lambda d_\Omega(x_2) - \frac{\sigma}{2}\,\lambda(1-\lambda)\,\|x_1-x_2\|_2^2 \\
			&\le \max\{d_\Omega(x_1),d_\Omega(x_2)\}
			- \frac{\sigma}{2}\,\lambda(1-\lambda)\,\|x_1-x_2\|_2^2.
		\end{align*}
		This justifies \eqref{key5-correct}. 
		If $x_\lambda\in\Omega$, then $d_\Omega(x_\lambda)=0$, and 
		it follows from~\eqref{eq:delta-lower} that 
		\begin{equation}\label{key18}
			\delta\le \max\{d_\Omega(x_1),d_\Omega(x_2)\}.
		\end{equation}
		Since $x_1,x_2\in\mathbb{S}^2_r$, we have $$\|x_1-x_2\|_2\le 2r\ \;\text{and}\ \;
		0<\lambda(1-\lambda)\le\frac14,$$ 
		which implies that
		\[
		\frac{\sigma}{2}\,\lambda(1-\lambda)\,\|x_1-x_2\|_2^2
		\le
		\frac{\sigma}{2}\,r^2.
		\]
		Combining this with \eqref{eq:sigma-delta} and \eqref{key18}, we obtain
		\[
		\frac{\sigma}{2}\,\lambda(1-\lambda)\,\|x_1-x_2\|_2^2
		\;\le\;\frac{\sigma}{2}\,r^2  \;\le\; \delta
		\;\le\; \max\{d_\Omega(x_1),d_\Omega(x_2)\},
		\]
		which shows that \eqref{key5-correct} also holds in this case.
        Combining the two cases, we conclude that for all $x_1,x_2\in\mathbb S^2_r$,
inequality~\eqref{key5-correct} holds for every $\lambda\in(0,1)$. This completes
the proof.
	\end{proof}

\begin{remark}\label{lem:MR-ball}
	Theorem~\ref{pro1.9-correct} may be viewed as an extension of
	Proposition~\ref{pro1.4} from a Euclidean ball to an arbitrary
	strongly convex set. Indeed, let
	\[
		\Omega=\mathbb B^2[0;R]\subset\mathbb R^n
	\]
	be the Euclidean ball of radius $R>0$. Fix $r>R$ and take arbitrary
	points $x_1,x_2\in\mathbb S^2_r$. For $i=1,2$, let $p_i$ denote the
	Euclidean projection of $x_i$ onto $\Omega$. Since the projection onto a
	Euclidean ball is radial, we have
	\[
		p_i=\frac{R}{r}x_i,\qquad i=1,2.
	\]
	Consequently,
	\[
		\|p_1-p_2\|_2
		=
		\frac{R}{r}\|x_1-x_2\|_2.
	\]
	Thus, condition~\eqref{metric-regular-sphere} holds with $c_0=R/r$.
\end{remark}

The following example shows that condition~\eqref{metric-regular-sphere} cannot
be extended to arbitrary neighborhoods of points lying outside a ball.

\begin{example}\label{ex:MR-fails-neighborhood}
	Consider the Euclidean space $\mathbb R^n$, and let $\Omega=\mathbb B^2[0;1]$
	be the closed unit ball. Fix any point $x_0\notin\Omega$ and let
	\[
		\delta=d_\Omega(x_0)=\|x_0\|_2-1>0.
	\]
	Let $U$ be any neighborhood of $x_0$ satisfying
		$U\cap\Omega=\emptyset.$
	Then there exists $\rho>0$ such that
	\[
		\mathbb B^2[x_0;\rho]\subset U.
	\]
	Choose $\varepsilon\in(0,\min\{\rho,\delta\})$. Set	$u=\frac{x_0}{\|x_0\|_2}$
	and define
	\[
		x_1=x_0-\varepsilon u
		=(\|x_0\|_2-\varepsilon)u,
		\qquad
		x_2=x_0+\varepsilon u
		=(\|x_0\|_2+\varepsilon)u.
	\]
	Then
	\[
		x_1,x_2\in\mathbb B^2[x_0;\rho]\subset U.
	\]
	Since $\varepsilon<\delta=\|x_0\|_2-1$, we see that both $x_1, x_2\notin \Omega$. Note that their Euclidean projections onto $\Omega$ coincide with 
	\[
		P_\Omega(x_1)=P_\Omega(x_2)=u.
	\]
	Consequently,
	\[
		\|P_\Omega(x_1)-P_\Omega(x_2)\|_2=0.
	\]
	On the other hand,
	\[
		\|x_1-x_2\|_2=2\varepsilon>0.
	\]
	Thus, for any constant $c>0$, one has
	\[
		\|P_\Omega(x_1)-P_\Omega(x_2)\|_2
		=
		0
		<
		c\|x_1-x_2\|_2.
	\]
	This shows that an estimate of the form~\eqref{metric-regular-sphere} cannot
	hold uniformly on arbitrary neighborhoods of points outside $\Omega$. 
\end{example}

	\section{Conclusions}\label{sec5}
	
	This paper provides further insights into the strong quasiconvexity of norm functions while initiating the study of the strong quasiconvexity of distance functions in finite-dimensional spaces. These results open several promising directions for future research. These include sharpening convergence rates for projection-type algorithms, investigating generalized distance functions in variational analysis and applications to economics and data science, and extending the present theory to infinite-dimensional settings.


\begin{thebibliography}{99}

\bibitem{Bauschke}
Bauschke HH, Combettes PL. Convex analysis and monotone operator theory in Hilbert spaces. 2nd ed. New York: Springer; 2017.

\bibitem{Clarkson}
Clarkson JA. Uniformly convex spaces. Trans Amer Math Soc. 1936;40(3):396--414.

\bibitem{Crouzeix}
Crouzeix JP. Continuity and differentiability of quasiconvex functions. In: Hadjisavvas N, Koml\'{o}si S, Schaible S, editors. Handbook of generalized convexity and generalized monotonicity. Nonconvex optimization and its applications. Vol. 76. New York: Springer; 2005. p. 121--139.

\bibitem{Grad}
Grad SM, Lara F, Marcavillaca RT. Strongly quasiconvex functions: what we know (so far). J Optim Theory Appl. 2025;205(2):38. doi:10.1007/s10957-025-02641-4.

\bibitem{Hadjisavvas}
Hadjisavvas N. Convexity, generalized convexity and applications. In: Al-Mezel S, et al., editors. Fixed point theory, variational analysis and optimization. Boca Raton: Taylor \& Francis; 2014. p. 139--169.


\bibitem{Hadjisavvas1}
Hadjisavvas N, Lara F. Characterizations of strongly quasiconvex functions. J Global Optimi. 2026;94(4);997--1003.

\bibitem{Hadjisavvas2}
	Hadjisavvas N, Lara F,  Martínez-Legaz, JE. A quasiconvex asymptotic function with applications in optimization. J Optim Theory and Appl. 2019;180(1):170--186.
	
\bibitem{jensen}
Jensen JLWV. Om konvekse funktioner og uligheder imellem middelvaerdier. Nyt Tidsskr Math. 1905;16:49--68.

\bibitem{jova}
Jovanovi\'{c} MV. A note on strongly convex and quasiconvex functions. Math Notes. 1996;60(5):584--585.


\bibitem{lara2}
Lara F. On strongly quasiconvex functions: existence results and proximal point algorithms. J Optim Theory Appl. 2022;192:891--911.

\bibitem{lara3}
Lara F, Marcavillaca RT, Vuong PT. Characterizations, dynamical systems and gradient methods for strongly quasiconvex functions. J Optim Theory Appl. 2024;206:60.

\bibitem{lara1} Lara F, Marcavillaca RT, Yen LH. An extragradient projection method for strongly quasiconvex equilibrium problems with applications. Comput. Appl. Math.2024;43(3):128.

\bibitem{Martinez}
Mart\'{i}nez-Legaz JE. On strongly convex sets and farthest distance functions. arXiv preprint arXiv:2507.15053; 2025.

\bibitem{mordukhovich2023easy}
Mordukhovich BS, Nam NM. An easy path to convex analysis and applications. 2nd ed. Cham: Springer; 2023.

\bibitem{nacry}
Nacry F, Nguyen VAT, Thibault L. Farthest distance function to strongly convex sets. J Convex Anal. 2023;30:1217--1240.

\bibitem{Nam-Jacob}
Nam NM, Sharkansky J. On strong quasiconvexity of functions in infinite dimensions. Optim Lett. 2025;19(6):1849--1865. doi:10.1007/s11590-025-02261-x.

\bibitem{Nesterov}
Nesterov Y. Introductory lectures on convex optimization: a basic course. Boston: Springer; 2004.

\bibitem{polyak}
Polyak BT. Existence theorems and convergence of minimizing sequences in extremum problems with restrictions. Sov Math Dokl. 1966;7:72--75.

\bibitem{rock27}
Rockafellar RT. Convex analysis. Princeton (NJ): Princeton University Press; 1970.

\bibitem{vanhang}
Van Hang NT, Lara F, Yen ND. Discontinuous Strongly Quasiconvex Functions. J Optim Theory Appl. 2026;210:40. doi.org/10.1007/s10957-026-03066-3

\bibitem{vial}
Vial JP. Strong convexity of sets and functions. J Math Econ. 1982;9(1--2):187--205.

\bibitem{Zalinescu}
Z\u{a}linescu C. On uniformly convex functions. J Math Anal Appl. 1983;95(2):344--374.

\bibitem{za28}
Z\u{a}linescu C. Convex analysis in general vector spaces. Singapore: World Scientific; 2002.

\end{thebibliography}
\end{document}